\font\smallit=cmti10
\font\smalltt=cmtt10
\renewcommand\section{\@startsection {section}{1}{\z@}
{-30pt \@plus -1ex \@minus -.2ex}
{2.3ex \@plus.2ex}
{\normalfont\normalsize\bfseries\boldmath}}
\renewcommand\subsection{\@startsection{subsection}{2}{\z@}
{-3.25ex\@plus -1ex \@minus -.2ex}
{1.5ex \@plus .2ex}
{\normalfont\normalsize\bfseries\boldmath}}
\renewcommand{\@seccntformat}[1]{\csname the#1\endcsname. }
\newtheorem{theorem}{Theorem}
\theoremstyle{definition}
\newtheorem{definition}{Definition}
\newtheorem{example}{Example}
\begin{document}

\begin{center}
\uppercase{\bf \boldmath Abundance of arithmetic progressions in $\mathcal{CR}$-sets}
\vskip 20pt
{\bf Dibyendu De\footnote{Authour is supported by NBHM 02011-6-2021-R\&D-II.}}\\
{\smallit  Department of Mathematics, University of Kalyani,
Kalyani-741235, Nadia, West Bengal, India}\\
{\tt dibyendude@klyuniv.ac.in}\\ 
\vskip 10pt
{\bf Pintu Debnath\footnote{Corresponding author.}}\\
{\smallit Department of Mathematics, Basirhat College,
Basirhat -743412, North 24th parganas, West Bengal, India}\\
{\tt pintumath1989@gmail.com}\\ 

\end{center}
\vskip 20pt
\centerline{\smallit Received: , Revised: , Accepted: , Published: } 
\vskip 30pt


\centerline{\bf Abstract}
\noindent
H.Furstenberg and E.Glasner proved that for an arbitrary
$k\in\mathbb{N}$, any piecewise syndetic set of integers contains a 
$k$-term arithmetic progression and the collection of such progressions
is itself piecewise syndetic in $\mathbb{Z}.$ The above result was
extended for arbitrary semigroups by V. Bergelson and N. Hindman,
using the algebra of the  Stone-\v{C}ech compactification of discrete
semigroups. However,  they provided an abundance for various types
of large sets. In \cite{DHS}, the first author, Neil Hindman and
Dona Strauss introduced two notions of large sets, namely, $J$-set
and $C$-set. In \cite{BG},  V. Bergelson and D. Glasscock introduced
another notion of largeness, which is analogous to the notion of $J$-set,
namely $\mathcal{CR}$- set. All these sets contain arithmetic
progressions of arbitrary length. In \cite{DG}, the second author
and S. Goswami proved that for any $J$-set, $A\subseteq\mathbb{N}$,
the collection $\{(a,b):\,\{a,a+b,a+2b,\ldots,a+lb\}\subset A\}$
is a $J$-set in $(\mathbb{N\times\mathbb{N}},+)$. In this article, 
we prove the same for $\mathcal{CR}$-sets.

\pagestyle{myheadings}
\markright{\smalltt INTEGERS: 24 (2024)\hfill}
\thispagestyle{empty}
\baselineskip=12.875pt
\vskip 30pt

\section{Introduction}

For a  general commutative semigroup $(S,+)$, a set $A\subseteq S$
is said to be syndetic in $(S,+)$, if there exists a finite set $F\subset S$
such that $\bigcup_{t\in F}-t+A=S$. A set $A\subseteq S$ is said
to be thick if for every finite set $E\subset S$, there exists an
element $x\in S$ such that $E+x\subset A$. A set $A\subseteq S$
is said to be a piecewise syndetic set if there exists a finite set $F\subset S$
such that $\bigcup_{t\in F}-t+A$ is thick in $S$ \cite[Definition 4.38, page 101]{HS12}.
It can be proved that a piecewise syndetic set is the intersection
of thick set and syndetic set \cite[Theorem 4.49, page 105]{HS12}.

One of the famous Ramsey theoretic results is the so called van
der Waerden's Theorem \cite{W}, which states that
at least one cell of any partition $\{C_{1},C_{2},\ldots,C_{r}\}$
of $\mathbb{N}$,  contains an arithmetic progression of arbitrary length.
Since arithmetic progressions are invariant under shifts, it follows
that every piecewise syndetic set contains arbitrarily long arithmetic
progressions. The following theorem was proved algebraically by H.
Furstenberg and E. Glasner in \cite{FG} and combinatorially by
Beigelb\"{o}ck in \cite{B}.

\begin{theorem}
Let $k\in\mathbb{N}$ and assume that $S\subseteq\mathbb{Z}$
is piecewise syndetic. Then $\{(a,d)\,:\,\left\{ a,a+d,\ldots,a+kd\right\} \subset S\}$
is piecewise syndetic in $\mathbb{Z}^{2}$.
\end{theorem}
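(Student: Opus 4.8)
The plan is to argue in the Stone-\v{C}ech compactification $\beta\mathbb{Z}$, using the standard dictionary \cite{HS12} that a subset $A$ of a commutative semigroup is piecewise syndetic precisely when $\overline{A}\cap K(\beta(\cdot))\ne\emptyset$, i.e.\ when $A$ belongs to some ultrafilter lying in the smallest two-sided ideal. Writing $T_i\colon\mathbb{Z}^2\to\mathbb{Z}$, $T_i(a,d)=a+id$ (a semigroup homomorphism) for $i=0,1,\dots,k$, the set in question is $B=\bigcap_{i=0}^k T_i^{-1}(S)$, so the goal is to produce a single $r\in K(\beta(\mathbb{Z}^2))$ with $B\in r$. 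I would do this by first reducing to a ``central'' version of the statement and then feeding in van der Waerden's theorem through the Central Sets Theorem.

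For the reduction: since $S$ is piecewise syndetic, fix $p\in\overline S\cap K(\beta\mathbb{Z})$; as $p$ lies in a minimal left ideal $L$ with $L+p=L$, there is a minimal idempotent $e\in L$ with $e+p=p$ \cite{HS12}. Then $C:=\{x\in\mathbb{Z}:-x+S\in p\}$ belongs to $e$, so $C$ is central, and every $x\in C$ satisfies $-x+S\in p$. Now let $\iota\colon\mathbb{Z}\to\mathbb{Z}^2$, $\iota(y)=(y,0)$, and set $\widehat p:=\widetilde\iota(p)$, the image of $p$ under the continuous extension of $\iota$. Unwinding the definition of the operation of $\beta(\mathbb{Z}^2)$ gives, for every $w\in\beta(\mathbb{Z}^2)$,
\[
B\in w+\widehat p\iff \{(a,d):\{y:(a+y,d)\in B\}\in p\}\in w\iff \mathrm{AP}_k(C)\in w ,
\]
where $\mathrm{AP}_k(C):=\{(a,d):\{a,a+d,\dots,a+kd\}\subseteq C\}$; the second equivalence uses that $\{y:(a+y,d)\in B\}=\bigcap_{i=0}^k(-(a+id)+S)$, an intersection which lies in the ultrafilter $p$ exactly when every $a+id$ lies in $C$. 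Hence, if $\mathrm{AP}_k(C)$ is piecewise syndetic in $\mathbb{Z}^2$---say $\mathrm{AP}_k(C)\in w$ with $w\in K(\beta(\mathbb{Z}^2))$---then $r:=w+\widehat p$ again lies in the two-sided ideal $K(\beta(\mathbb{Z}^2))$ and satisfies $B\in r$, and we are done.

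It remains to treat the central case: if $C\subseteq\mathbb{Z}$ is central, then $\mathrm{AP}_k(C)$ is piecewise syndetic---in fact central---in $\mathbb{Z}^2$. This is exactly where a genuinely Ramsey-theoretic fact must enter, and I would extract it from the Central Sets Theorem \cite{HS12}: applied to the constant sequences $n\mapsto i$ for $i=0,1,\dots,k$ it places $(k+1)$-term progressions $\{a,a+d,\dots,a+kd\}$ with $d\ge1$ inside $C$, and in its full strength it yields a minimal idempotent of $\beta(\mathbb{Z}^2)$ containing the whole set of corresponding pairs $(a,d)$, i.e.\ $\mathrm{AP}_k(C)$ is central in $\mathbb{Z}^2$. (One may instead invoke the original proofs of Furstenberg--Glasner \cite{FG} or Beiglb\"ock \cite{B}.)

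In this scheme the work in $\beta\mathbb{Z}$---peeling off the idempotent $e$ and checking the identity $B\in w+\widehat p\iff\mathrm{AP}_k(C)\in w$---is routine. The genuine obstacle is the central case: formal manipulation of ultrafilters never produces an arithmetic progression, so one must import van der Waerden's theorem, and import it in a sufficiently robust form (the Central Sets Theorem) to obtain not merely one progression but an \emph{abundant} family of parameters $(a,d)$ that is itself large in $\mathbb{Z}^2$. Securing that abundance, rather than mere existence, is the delicate point---and it is precisely what the remainder of the paper must reprove for $\mathcal{CR}$-sets.
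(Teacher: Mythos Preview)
The paper does not prove this theorem. It is stated in the introduction as a known background result, attributed to Furstenberg--Glasner \cite{FG} (algebraic) and Beiglb\"ock \cite{B} (combinatorial), and serves only as motivation for the paper's own results on $\mathcal{CR}$-sets. There is therefore no in-paper proof against which to compare your proposal.

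Assessing the proposal on its own merits: your reduction from the general piecewise syndetic case to the central case is correct and standard---picking $p\in\overline S\cap K(\beta\mathbb{Z})$, writing $p=e+p$ with $e$ a minimal idempotent, and verifying the equivalence $B\in w+\widehat p\iff\mathrm{AP}_k(C)\in w$ is routine $\beta$-algebra. The gap is in the central step. Applying the Central Sets Theorem to the constant sequences $n\mapsto i$ gives, even in its iterated form, only an IP set $\mathrm{FS}\bigl(\langle(a_n,|H_n|)\rangle\bigr)$ inside $\mathrm{AP}_k(C)$; but an IP subset of $\mathbb{Z}^2$ need not be piecewise syndetic, let alone central, so the phrase ``in its full strength it yields a minimal idempotent of $\beta(\mathbb{Z}^2)$'' is carrying unexplained weight. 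And your parenthetical fallback---invoking \cite{FG} or \cite{B} for the central case---is circular: those references prove precisely the theorem you are trying to establish, and the central case is just a special instance of it. To close the argument you need the genuine content of the Bergelson--Hindman abundance machinery \cite{BH}, which constructs from the minimal idempotent $e\in\beta\mathbb{Z}$ a minimal idempotent of $\beta(\mathbb{Z}^2)$ containing $\mathrm{AP}_k(C)$; but that construction is exactly the substantive step your sketch defers, and it is not a consequence of the Central Sets Theorem as usually stated.
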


To state the next theorem we need the following Definition. 
\begin{definition}

Let $\left(S,+\right)$ be a commutative semigroup and let
$A\subseteq S$. A is a $J$-set if and only if for every $F\in\mathcal{P}_{f}\left(S^{\mathbb{N}}\right)$,
there exist $a\in S$ and $H\in\mathcal{P}_{f}\left(\mathbb{N}\right)$
such that for each $f\in F$, $a+\sum_{n\in H}f(n)\in A$.
\end{definition}

In \cite{DG}, the second author and S. Goswami proved
the following:

\begin{theorem}
Let $k\in\mathbb{N}$ and assume that $S\subseteq\mathbb{N}$
be a $J$-set. Then $$\{(a,d):\,\left\{ a,a+d,\ldots,a+kd\right\} \subset S\}$$
is also a $J$-set in $\mathbb{N} \times \mathbb{N}$.
\end{theorem}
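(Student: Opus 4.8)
The plan is to verify the defining property of a $J$-set for $B=\{(a,d):\{a,a+d,\ldots,a+kd\}\subseteq S\}$ directly, bootstrapping from the hypothesis that $S$ itself is a $J$-set in $\mathbb{N}$. Fix a finite family $F=\{g^{(1)},\ldots,g^{(m)}\}\in\mathcal{P}_f\big((\mathbb{N}\times\mathbb{N})^{\mathbb{N}}\big)$ and write each $g^{(i)}$ in coordinates as $g^{(i)}(n)=(p_i(n),q_i(n))$ with $p_i,q_i\in\mathbb{N}^{\mathbb{N}}$. I must produce $(a,d)\in\mathbb{N}\times\mathbb{N}$ and $H\in\mathcal{P}_f(\mathbb{N})$ so that $(a,d)+\sum_{n\in H}g^{(i)}(n)\in B$ for every $i$. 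Unpacking coordinates, this asks that for every $i$ and every $j\in\{0,1,\ldots,k\}$,
\[
\Big(a+\sum_{n\in H}p_i(n)\Big)+j\Big(d+\sum_{n\in H}q_i(n)\Big)\in S .
\]

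The obstruction is that the common difference $d+\sum_{n\in H}q_i(n)$ of the progression we want inside $S$ depends on $i$, whereas $d$ (together with $a$ and $H$) must be chosen uniformly in $i$. The trick I would use is to force $d$ to be realized as a sum over $H$ of a fixed sequence, so that it merges with the $i$-dependent sum $\sum_{n\in H}q_i(n)$. Concretely, for $1\le i\le m$ and $0\le j\le k$ introduce the auxiliary sequences
\[
s_{i,j}(n)=p_i(n)+j\big(q_i(n)+1\big),
\]
which are again $\mathbb{N}$-valued and finite in number, and apply the $J$-set property of $S$ to the family $\{s_{i,j}:1\le i\le m,\ 0\le j\le k\}$. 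This yields $a\in\mathbb{N}$ and $H\in\mathcal{P}_f(\mathbb{N})$ with $a+\sum_{n\in H}s_{i,j}(n)\in S$ for all $i,j$.

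Now set $d=|H|$, the cardinality of $H$, so $d\in\mathbb{N}$ and $(a,d)$ is a legitimate element of $\mathbb{N}\times\mathbb{N}$. For each $i$, writing $(a',d')=(a,d)+\sum_{n\in H}g^{(i)}(n)$, we get $d'=|H|+\sum_{n\in H}q_i(n)=\sum_{n\in H}(q_i(n)+1)$, and a one-line computation gives
\[
a'+jd'=a+\sum_{n\in H}\big(p_i(n)+j(q_i(n)+1)\big)=a+\sum_{n\in H}s_{i,j}(n)\in S
\]
for every $j\in\{0,\ldots,k\}$. Hence $\{a',a'+d',\ldots,a'+kd'\}\subseteq S$, i.e.\ $(a',d')\in B$, and since this holds for every $g^{(i)}\in F$ we conclude that $B$ is a $J$-set in $\mathbb{N}\times\mathbb{N}$.

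I do not expect a genuine obstacle here beyond choosing the sequences $s_{i,j}$ correctly; the remaining points are pure bookkeeping, namely keeping straight that $a,d,H$ are fixed across all $i$ while the sums over $H$ vary with $i$. The same scheme is what I would then try to transfer to $\mathcal{CR}$-sets, replacing the single application of the $J$-set property above by the corresponding property defining $\mathcal{CR}$-sets; the only thing to check is that that property is robust enough to absorb the auxiliary sequences $s_{i,j}$ and the choice $d=|H|$ in the same way.
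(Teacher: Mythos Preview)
Your proof is correct. The paper itself does not prove this statement---it is quoted from \cite{DG} without proof---so there is no in-paper argument to compare against directly. That said, your reasoning is exactly the $J$-set analogue of the paper's proof of Theorem~\ref{main theoreom1}: your auxiliary sequences $s_{i,j}(n)=p_i(n)+j\bigl(q_i(n)+1\bigr)$ play the same role as the paper's concatenated columns with entries $M_{ij}^{1}+k\bigl(s+M_{ij}^{2}\bigr)$, and your choice $d=|H|$ corresponds precisely to the paper's second coordinate $|\alpha|\,s$ (specialized to $s=1$), so the two arguments share the same key idea.
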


To express the main result of this article, we have to first
define combinatorially rich set or $\mathcal{CR}$-set introduced
by V. Bergelson and D. Glasscock. For $n,r\in\mathbb{N}$, denote
by $S^{r\times n}$ the set of $r\times n$ matrices with elements
in $S$. For $M=\left(M_{ij}\right)\in S^{r\times n}$ and a non-empty
$\alpha\subseteq\{1,2,\cdots,n\}$, $M_{\alpha j}$ denotes the sum
$\sum_{i\in\alpha}M_{ij}$.

\begin{definition}
Let $\left(S,+\right)$ be a commutative semigroup. A subset
$A\subseteq S$ is a $\mathcal{CR}$-set if for all $n\in\mathbb{N}$,
there exists a $r\in\mathbb{N}$ such that for all $M\in S^{r\times n}$,
there exists a non-empty set $\alpha\subseteq\{1,2,\ldots,r\}$, and
$s\in S$ such that for all $j\in\{1,2,\ldots,n\}$, 
\[
s+M_{\alpha,j}\in A.
\]
We denote by $\mathcal{CR}\left(S,+\right)$, the class of combinatorially
rich subsets of $\left(S,+\right)$.
\end{definition}

Choosing $\left(S,+\right)=\left(\mathbb{N},+\right)$ and
 $M_{ij}=j$, from the above definition, there exist a nonempty
$\alpha\subseteq\{1,2,\ldots,r\}$ and $s\in\mathbb{N}$ such that
$$\left\{ s+\mid\alpha\mid,s+2\mid\alpha\mid,\ldots,s+n\mid\alpha\mid\right\} \subset A .$$
Thus, combinatorially rich sets in $(\mathbb{N},+)$ are $AP$-rich i.e., they
contain arbitrary long arithmetic progressions. It can be stated that piecewise syndetic subsets of $S$  are $\mathcal{CR}$-sets and that  $\mathcal{CR}$-subsets of $S$  are $J$-sets. Of course, the first inclusion implies that the set of $\mathcal{CR}$-subsets of $S$ is non-empty, and the second inclusion immediately implies that $CR$-subsets of $S$ contain arbitrarily long arithmetic progressions. We prove that for any  $\mathcal{CR}$-set
$A\subseteq\mathbb{N}$, the collection $\{(a,b):\,\{a,a+b,a+2b,\ldots,a+lb\}\subset A\}$
is a $\mathcal{CR}$-set in $(\mathbb{N\times\mathbb{N}},+)$ and the same
result for  essential $\mathcal{CR}$-sets. The next section is devoted
to essential $\mathcal{CR}$-sets. 

\section{Essential $\mathcal{CR}$-set}

A collection $\mathcal{F\subseteq P}\left(S\right)\setminus\left\{ \emptyset\right\} $
is upward hereditary if whenever $A\in\mathcal{F}$ and $A\subseteq B\subseteq S$
then it follows that $B\in\mathcal{F}$. A nonempty and upward hereditary
collection $\mathcal{F\subseteq P}\left(S\right)\setminus\left\{ \emptyset\right\} $
will be called a family. If $\mathcal{F}$ is a family, the dual family
$\mathcal{F}^{*}$ is given by
$$
\mathcal{F}^{*}=\{E\subseteq S:\forall A\in\mathcal{F},E\cap A\neq\emptyset\}.
$$
A family $\mathcal{F}$ possesses the Ramsey property if,  whenever
$A\in\mathcal{F}$ and $A=A_{1}\cup A_{2}$ there is some $i\in\left\{ 1,2\right\} $
such that $A_{i}\in\mathcal{F}$.

We give a brief review of the algebraic structure of the Stone-\v{C}ech
compactification of discrete semigroups.

Let $S$ be a discrete semigroup.The elements of $\beta S$ are regarded as ultrafilters on $S$. Let $\overline{A}=\left\{p\in \beta S:A\in p\right\}$. The set $\{\overline{A}:A\subset S\}$ is a basis for the closed sets
of $\beta S$. The operation `$\cdot$' on $S$ can be extended to
the Stone-\v{C}ech compactification $\beta S$ of $S$ so that $(\beta S,\cdot)$
is a compact right topological semigroup (meaning that for each    $p\in\beta$ S the function $\rho_{p}\left(q\right):\beta S\rightarrow\beta S$ defined by $\rho_{p}\left(q\right)=q\cdot p$ 
is continuous) with $S$ contained in its topological center (meaning
that for any $x\in S$, the function $\lambda_{x}:\beta S\rightarrow\beta S$
defined by $\lambda_{x}(q)=x\cdot q$ is continuous). This is a famous
Theorem due to Ellis that if $S$ is a compact right topological semigroup
then the set of idempotents $E\left(S\right)\neq\emptyset$. A nonempty
subset $I$ of a semigroup $T$ is called a $\textit{left ideal}$
of $S$ if $TI\subset I$, a $\textit{right ideal}$ if $IT\subset I$,
and a $\textit{two sided ideal}$ (or simply an $\textit{ideal}$)
if it is both a left and right ideals. A minimal left ideal
is the left ideal that does not contain any proper left ideal. Similarly,
we can define minimal right ideal and the smallest ideal.

Any compact Hausdorff right topological semigroup $T$ has the smallest
two sided ideal

$$
\begin{aligned}
	K(T) & =  \bigcup\{L:L\text{ is a minimal left ideal of }T\}\\
	&=  \bigcup\{R:R\text{ is a minimal right ideal of }T\}.
\end{aligned}$$

Given a minimal left ideal $L$ and a minimal right ideal $R$, $L\cap R$
is a group, and in particular contains an idempotent. If $p$ and
$q$ are idempotents in $T$; we write $p\leq q$ if and only if $pq=qp=p$.
An idempotent is minimal with respect to this relation if and only
if it is a member of the smallest ideal $K(T)$ of $T$. Given $p,q\in\beta S$
and $A\subseteq S$, $A\in p\cdot q$, if and only if the set $\{x\in S:x^{-1}A\in q\}\in p$,
where $x^{-1}A=\{y\in S:x\cdot y\in A\}$. See \cite{HS12} for
an elementary introduction to the algebra of $\beta S$ and for any
unfamiliar details.

It is known that the family $\mathcal{F}$ has the Ramsey
property iff the family $\mathcal{F}^{*}$ is a filter. For a family
$\mathcal{F}$ with the Ramsey property, let $\beta(\mathcal{F})=\{p\in\beta S:p\subseteq\mathcal{F}\}$.
Then we get the following from \cite[Theorem 5.1.1]{C}:

\begin{theorem}
Let $S$ be a discrete set. For every family $\mathcal{F\subseteq P}\left(S\right)$
with the Ramsey property, $\beta\left(\mathcal{F}\right)\subseteq\beta S$
is closed. Furthermore, $\mathcal{F}=\cup\beta\left(\mathcal{F}\right)$.
Also if $K\subseteq\beta S$ is closed, $\mathcal{F}_{K}=\left\{ E\subseteq S:\overline{E}\cap K\neq\emptyset\right\} $
is a family with the Ramsey property and $\overline{K}=\beta\left(\mathcal{F}_{K}\right)$.
\end{theorem}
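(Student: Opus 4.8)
The plan is to establish the three assertions in turn, all by elementary manipulations with the clopen basis $\{\overline{A}:A\subseteq S\}$ of $\beta S$ together with the hypothesis that the relevant family has the Ramsey property. For the first assertion, that $\beta(\mathcal{F})$ is closed, I would show its complement is open: if $p\in\beta S\setminus\beta(\mathcal{F})$, then $p\not\subseteq\mathcal{F}$, so there is $A\in p$ with $A\notin\mathcal{F}$; the set $\overline{A}$ is a clopen neighbourhood of $p$, and every $q\in\overline{A}$ has $A\in q$ with $A\notin\mathcal{F}$, hence $q\notin\beta(\mathcal{F})$, so $\overline{A}\subseteq\beta S\setminus\beta(\mathcal{F})$.

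For the identity $\mathcal{F}=\cup\beta(\mathcal{F})$ the inclusion $\supseteq$ is immediate, since $A\in p$ and $p\subseteq\mathcal{F}$ give $A\in\mathcal{F}$. The content is the reverse inclusion, and this is where the Ramsey property enters. I would first observe that $\mathcal{I}:=\mathcal{P}(S)\setminus\mathcal{F}$ is a proper ideal on $S$: it is downward closed because $\mathcal{F}$ is upward hereditary, it contains $\emptyset$ because the members of $\mathcal{F}$ are nonempty, it omits $S$ because $\mathcal{F}$ is nonempty and upward hereditary (so $S\in\mathcal{F}$), and it is closed under finite unions precisely by the contrapositive of the Ramsey property. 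Hence $\mathcal{I}^{*}=\{S\setminus E:E\in\mathcal{I}\}$ is a proper filter (in fact $\mathcal{I}^{*}=\mathcal{F}^{*}$, consistent with the stated characterization). Given $A\in\mathcal{F}$, i.e. $A\notin\mathcal{I}$, the collection $\{A\}\cup\mathcal{I}^{*}$ has the finite intersection property, because $A\cap(S\setminus E_{1})\cap\dots\cap(S\setminus E_{n})=A\setminus(E_{1}\cup\dots\cup E_{n})$ is nonempty (otherwise $A$ would lie in the ideal member $E_{1}\cup\dots\cup E_{n}$). Extending to an ultrafilter $p$ yields $A\in p$ and $S\setminus E\in p$ for every $E\in\mathcal{I}$, so $p\cap\mathcal{I}=\emptyset$, i.e. $p\subseteq\mathcal{F}$; thus $p\in\beta(\mathcal{F})$ and $A\in\cup\beta(\mathcal{F})$.

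For the last assertion I would first check that $\mathcal{F}_{K}$ is a family: it is nonempty since $\overline{S}=\beta S$ gives $S\in\mathcal{F}_{K}$ (here one uses the standing assumption $K\neq\emptyset$, without which $\mathcal{F}_{K}=\emptyset$ fails to be a family in the sense defined), its members are nonempty since $\overline{\emptyset}=\emptyset$, and it is upward hereditary since $E\subseteq B$ gives $\overline{E}\subseteq\overline{B}$. The Ramsey property of $\mathcal{F}_{K}$ follows from $\overline{E_{1}\cup E_{2}}=\overline{E_{1}}\cup\overline{E_{2}}$: if $\overline{E_{1}\cup E_{2}}\cap K\neq\emptyset$ then $\overline{E_{1}}\cap K\neq\emptyset$ or $\overline{E_{2}}\cap K\neq\emptyset$. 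Finally, for $\overline{K}=\beta(\mathcal{F}_{K})$ — note $\overline{K}=K$ since $K$ is already closed — the inclusion $K\subseteq\beta(\mathcal{F}_{K})$ holds because $p\in K$ and $E\in p$ force $p\in\overline{E}\cap K\neq\emptyset$, so every member of $p$ lies in $\mathcal{F}_{K}$; conversely, if $p\notin K$ then, $K$ being closed and the $\overline{A}$ forming a clopen basis, there is $A\in p$ with $\overline{A}\cap K=\emptyset$, whence $A\in p\setminus\mathcal{F}_{K}$ and $p\notin\beta(\mathcal{F}_{K})$.

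The only genuinely non-formal step is the reverse inclusion in $\mathcal{F}=\cup\beta(\mathcal{F})$; the main obstacle there is recognizing that the Ramsey property is exactly what makes the non-members of $\mathcal{F}$ closed under finite unions, so that a Zorn/compactness ultrafilter extension can be carried out entirely inside $\mathcal{F}$. Everything else reduces to routine bookkeeping with the basic clopen sets $\overline{A}$ and the identities $\overline{A\cup B}=\overline{A}\cup\overline{B}$ and $A\subseteq B\Rightarrow\overline{A}\subseteq\overline{B}$.
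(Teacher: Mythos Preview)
Your argument is correct in all three parts. However, the paper does not actually supply a proof of this theorem: it simply quotes the statement as \cite[Theorem 5.1.1]{C} from Christopherson's thesis, so there is no in-paper proof to compare against. What you have written is a standard and complete self-contained proof; the key observation---that the Ramsey property is exactly the condition making $\mathcal{P}(S)\setminus\mathcal{F}$ closed under finite unions, so that the dual $\mathcal{F}^{*}$ is a filter and an ultrafilter extension can be carried out inside $\mathcal{F}$---is the expected one and almost certainly matches the cited source. Your caveat that the last clause tacitly assumes $K\neq\emptyset$ is also well taken.
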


Let $S$ be a discrete semigroup, then for every family,  $\mathcal{F\subseteq P}\left(S\right)$
with the Ramsey property, $\beta\left(\mathcal{F}\right)\subseteq\beta S$
is closed. If $\beta\left(\mathcal{F}\right)$ be a subsemigroup of
$\beta S$, then $E\left(\beta\mathcal{F}\right)\neq\emptyset$.

\begin{definition}
Let $\mathcal{F}$ be a family with the Ramsay property such
that $\beta(\mathcal{F})$ is a subsemigroup of $\beta S$ and $p$
be an idempotent in $\beta(\mathcal{F})$, then each member of $p$
is called an essential $\mathcal{F}$-set. 
\end{definition}

The family $\mathcal{F}$ is called left (right) shift-invariant
if for all $s\in S$ and all $E\in\mathcal{F}$,  one has $sE\in\mathcal{F}(Es\in\mathcal{F})$.
The family $\mathcal{F}$ is called left (right) inverse shift-invariant
if for all $s\in S$ and all $E\in\mathcal{F}$,  one has $s^{-1}E\in\mathcal{F}(Es^{-1}\in\mathcal{F})$.
 We derive the following theorem from \cite[Theorem 5.1.2]{C}:
\begin{theorem}
If $\mathcal{F}$ is a family having the Ramsey property
then $\beta\mathcal{F}\subseteq\beta S$ is a left ideal if and only
if $\mathcal{F}$ is left shift-invariant. Similarly, $\beta\mathcal{F}\subseteq\beta S$
is a right ideal if and only if $\mathcal{F}$ is right shift-invariant.
\end{theorem}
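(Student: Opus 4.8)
The plan is to prove both biconditionals directly from the combinatorial description of the product in $\beta S$, together with two facts already recorded above: that $\mathcal{F}=\bigcup\beta(\mathcal{F})$, so a set lies in $\mathcal{F}$ exactly when it belongs to some ultrafilter in $\beta(\mathcal{F})$, and that $\mathcal{F}$, being a family, is upward hereditary. Recall also that for $A\subseteq S$ and $p,q\in\beta S$ one has $A\in p\cdot q$ if and only if $\{x\in S:x^{-1}A\in q\}\in p$.

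For the left-ideal statement I would argue as follows. If $\beta\mathcal{F}$ is a left ideal, fix $s\in S$ and $E\in\mathcal{F}$ and choose $p\in\beta(\mathcal{F})$ with $E\in p$; then $s\cdot p\in\beta(\mathcal{F})$, and $sE\in s\cdot p$ since $sE\in s\cdot p$ is equivalent to $s^{-1}(sE)\in p$ and $s^{-1}(sE)\supseteq E\in p$. Hence $sE\in\mathcal{F}$, so $\mathcal{F}$ is left shift-invariant. Conversely, if $\mathcal{F}$ is left shift-invariant, take $q\in\beta S$, $p\in\beta(\mathcal{F})$ and $A\in q\cdot p$; then $\{x:x^{-1}A\in p\}\in q$ is nonempty, so pick $x$ in it, giving $x^{-1}A\in p\subseteq\mathcal{F}$. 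Left shift-invariance gives $x(x^{-1}A)\in\mathcal{F}$, and since $x(x^{-1}A)\subseteq A$, upward heredity yields $A\in\mathcal{F}$; thus $q\cdot p\in\beta(\mathcal{F})$, and $\beta\mathcal{F}$ is a (nonempty) left ideal.

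The right-ideal statement is the mirror image, but the two sides are genuinely asymmetric because the product formula always keeps the "inner" ultrafilter on the $q$-side. The forward implication is still easy: if $\beta\mathcal{F}$ is a right ideal, for $E\in\mathcal{F}$ and $s\in S$ pick $p\in\beta(\mathcal{F})$ with $E\in p$; then $p\cdot s\in\beta(\mathcal{F})$ and $Es\in p\cdot s$ because $\{x:xs\in Es\}\supseteq E\in p$, so $Es\in\mathcal{F}$. For the converse, assume $\mathcal{F}$ is right shift-invariant, take $p\in\beta(\mathcal{F})$, $q\in\beta S$ and $A\in p\cdot q$, and set $C=\{x:x^{-1}A\in q\}$, so $C\in p\subseteq\mathcal{F}$. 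The problem reduces to producing one $t\in S$ with $At^{-1}:=\{x:xt\in A\}\in p$: given such $t$, the set $C\cap At^{-1}$ lies in $p$, hence in $\mathcal{F}$, and $(C\cap At^{-1})t\subseteq A$ lies in $\mathcal{F}$ by right shift-invariance, so $A\in\mathcal{F}$ by upward heredity, and $p\cdot q\in\beta(\mathcal{F})$.

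Producing that $t$ is the crux and, I expect, the main obstacle: one only sees immediately that $\bigcup_{t\in S}At^{-1}\supseteq C\in p$, which for infinite $S$ places no single $At^{-1}$ in $p$, so the Ramsey property must enter in an essential (not merely formal) way here. Indeed, without it one can arrange $A\in p\cdot q$ with $At^{-1}\notin p$ for every $t$, but only by forcing a set not in $\mathcal{F}$ into $p$, which $p\in\beta(\mathcal{F})$ excludes once $\mathcal{F}$ has the Ramsey property. The right framework for finishing this direction is to combine the closedness of $\beta(\mathcal{F})$, the description $\beta(\mathcal{F})=\{p:\mathcal{F}^{*}\subseteq p\}$ with $\mathcal{F}^{*}$ a filter, and the fact that $S$ lies in the topological centre of $\beta S$, as in \cite[Theorem 5.1.2]{C}, from which this statement is derived.
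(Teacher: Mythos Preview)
The paper does not prove this statement at all; it merely quotes it from \cite[Theorem~5.1.2]{C}. So there is no in-paper argument to compare your proposal against, and anything you supply already goes beyond what the paper does.

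Your treatment of the left-ideal biconditional is correct and complete, as is the forward implication of the right-ideal biconditional.

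For the converse of the right-ideal statement your proposal is explicitly incomplete: you reduce to producing a single $t\in S$ with $At^{-1}\in p$, correctly observe that the cover $\bigcup_{t\in S}At^{-1}\supseteq C\in p$ does not by itself place any $At^{-1}$ in $p$, and then defer to the same reference the paper cites. Two remarks. First, the detour through $C$ is unnecessary at that stage: if $At^{-1}\in p\subseteq\mathcal{F}$ then $(At^{-1})t\subseteq A$ already yields $A\in\mathcal{F}$ by right shift-invariance and upward heredity, without intersecting with $C$. Second, and more importantly, the Ramsey property does not rescue this step in the way you suggest: it speaks only to \emph{finite} partitions, not to infinite covers, so your heuristic that $At^{-1}\notin p$ for all $t$ could occur ``only by forcing a set not in $\mathcal{F}$ into $p$'' is not justified. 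The genuine asymmetry with the left-ideal case is that $\lambda_{p}$ is not continuous for general $p$, so one cannot pass from $p\cdot q$ to $p\cdot s$ with $s\in S$ by a density argument; your closing gesture toward closedness of $\beta(\mathcal{F})$, the filter $\mathcal{F}^{*}$, and the topological centre does not overcome this and remains, as you yourself say, a deferral to \cite{C} rather than a proof.
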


From \cite[Theorem 5.1.10]{C}, we can identify those
families $\mathcal{F}$ with Ramsey property for which $\beta\left(\mathcal{F}\right)$
is a subsemigroup of $\beta S$ . The condition is a rather technical
weakening of left shift invariance.
\begin{theorem}
Let $S$ be any semigroup, and let $\mathcal{F}$ be a family
of subsets of $S$ having the Ramsey property. Then the following
are equivalent:
\begin{itemize}
    \item[(1)]  $\beta\left(\mathcal{F}\right)$ is a subsemigroup of
$\beta S$.
\item[(2)] $\mathcal{F}$ has the following property: 

If $E\subseteq S$
is any set, and if there is $A\in\mathcal{F}$ such that for all finite
$H\subseteq A$,  one has $\left(\cap_{q\in H}x^{-1}E\right)\in\mathcal{F}$,
then $E\in\mathcal{F}$.
\end{itemize}

\end{theorem}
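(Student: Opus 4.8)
The plan is to prove the equivalence $(1)\Leftrightarrow(2)$ directly, using only the two facts recorded just above: that $\mathcal{F}=\bigcup\beta(\mathcal{F})$ --- so ``$E\in\mathcal{F}$'' means exactly ``$E\in p$ for some $p\in\beta(\mathcal{F})$'' --- and that for $p,q\in\beta S$ and $E\subseteq S$ one has $E\in p\cdot q$ if and only if $\{x\in S:x^{-1}E\in q\}\in p$. The one auxiliary tool I need is an \emph{$\mathcal{F}$-extension lemma}: if $\mathcal{B}\subseteq\mathcal{P}(S)$ is a collection such that every finite intersection of members of $\mathcal{B}$ lies in $\mathcal{F}$, then there is $q\in\beta(\mathcal{F})$ with $\mathcal{B}\subseteq q$. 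I would prove this by Zorn's lemma. The filter $\langle\mathcal{B}\rangle$ generated by $\mathcal{B}$ is a proper filter contained in $\mathcal{F}$, since each of its members is a superset of some finite intersection of members of $\mathcal{B}$ and hence belongs to $\mathcal{F}$ by upward heredity; so the set of filters $\Psi$ with $\langle\mathcal{B}\rangle\subseteq\Psi\subseteq\mathcal{F}$, ordered by inclusion, is nonempty and closed under unions of chains, and has a maximal element $q$. Then $q$ must be an ultrafilter: if some $D\subseteq S$ had $D\notin q$ and $S\setminus D\notin q$, maximality would produce $C_{1},C_{2}\in q$ with $C_{1}\cap D\notin\mathcal{F}$ and $C_{2}\cap(S\setminus D)\notin\mathcal{F}$, and then writing $C_{1}\cap C_{2}\in q\subseteq\mathcal{F}$ as $(C_{1}\cap C_{2}\cap D)\cup(C_{1}\cap C_{2}\cap(S\setminus D))$ would contradict the Ramsey property of $\mathcal{F}$ together with upward heredity. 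Hence $q\in\beta(\mathcal{F})$ and $\mathcal{B}\subseteq\langle\mathcal{B}\rangle\subseteq q$.

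Granting this lemma, $(1)\Rightarrow(2)$ goes as follows. Assume $\beta(\mathcal{F})$ is a subsemigroup, let $E\subseteq S$, and let $A\in\mathcal{F}$ be such that $\bigcap_{x\in H}x^{-1}E\in\mathcal{F}$ for every finite $H\subseteq A$. Apply the $\mathcal{F}$-extension lemma to $\mathcal{B}=\{x^{-1}E:x\in A\}$ --- whose finite intersections are exactly the sets $\bigcap_{x\in H}x^{-1}E$, all of which lie in $\mathcal{F}$ --- to get $q\in\beta(\mathcal{F})$ with $x^{-1}E\in q$ for every $x\in A$. Since $A\in\mathcal{F}=\bigcup\beta(\mathcal{F})$, pick $p\in\beta(\mathcal{F})$ with $A\in p$. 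Then $\{x\in S:x^{-1}E\in q\}\supseteq A\in p$, so $E\in p\cdot q$; and $p\cdot q\in\beta(\mathcal{F})$ because $\beta(\mathcal{F})$ is a subsemigroup, so $E\in\mathcal{F}$.

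Conversely, for $(2)\Rightarrow(1)$, let $p,q\in\beta(\mathcal{F})$ and take any $E\in p\cdot q$; I must show $E\in\mathcal{F}$. Set $A=\{x\in S:x^{-1}E\in q\}$. By definition of $p\cdot q$ we have $A\in p$, and $p\subseteq\mathcal{F}$ gives $A\in\mathcal{F}$. For a finite $H\subseteq A$, every $x\in H$ satisfies $x^{-1}E\in q$, so $\bigcap_{x\in H}x^{-1}E\in q$ because $q$ is an ultrafilter, and therefore $\bigcap_{x\in H}x^{-1}E\in\mathcal{F}$ since $q\subseteq\mathcal{F}$. Thus $E$ and $A$ satisfy the hypothesis of $(2)$, and we conclude $E\in\mathcal{F}$. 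As $E\in p\cdot q$ was arbitrary, $p\cdot q\subseteq\mathcal{F}$, i.e. $p\cdot q\in\beta(\mathcal{F})$, so $\beta(\mathcal{F})$ is a subsemigroup.

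The ultrafilter bookkeeping in the two implications is routine; the only step that genuinely uses the nature of $\mathcal{F}$ rather than the general machinery of $\beta S$ is the $\mathcal{F}$-extension lemma, and there the Ramsey property does the essential work of promoting a maximal ``$\mathcal{F}$-filter'' to an ultrafilter inside $\beta(\mathcal{F})$. This is also exactly the point that lets the argument handle an \emph{infinite} witnessing set $A$ in condition $(2)$: one cannot form $\bigcap_{x\in A}x^{-1}E$ and expect it to lie in $\mathcal{F}$ (or even to be nonempty), but one can still gather all the sets $x^{-1}E$, $x\in A$, into a single member of $\beta(\mathcal{F})$.
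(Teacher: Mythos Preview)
The paper does not provide its own proof of this theorem; it is stated there as a quotation of \cite[Theorem 5.1.10]{C}, so there is no in-paper argument to compare against. That said, your proof is correct and self-contained: the $\mathcal{F}$-extension lemma is proved cleanly via Zorn's lemma together with the Ramsey property, and both implications are then routine ultrafilter manipulations using the characterization $E\in p\cdot q \iff \{x:x^{-1}E\in q\}\in p$ and the identity $\mathcal{F}=\bigcup\beta(\mathcal{F})$. This is essentially the standard route to the result, and your closing remark correctly isolates the extension lemma as the one substantive step where the Ramsey hypothesis is used.
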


The elementary characterization of essential $\mathcal{F}$-sets
is known from \cite[Theorem 5]{DDG}.

\begin{definition}
Let $\omega$ be the first infinite ordinal and let each ordinal denotes the set of all it's predecessors. In particular, $0=\emptyset,$
for each $n\in\mathbb{N},\:n=\left\{ 0,1,...,n-1\right\} $.
\begin{itemize}
    \item[(a)] If $f$ is a function and $dom\left(f\right)=n\in\omega$,
then for all $x$, $f^{\frown}x=f\cup\left\{ \left(n,x\right)\right\} $.

    \item[(b)] Let $T$ be a set of  functions whose domains are members
of $\omega$. For each $f\in T$, $B_{f}\left(T\right)=\left\{ x:f^{\frown}x\in T\right\}$.
\end{itemize}
\end{definition}

We get the following theorem from \cite[Theorem 5]{DDG}
which plays a vital role in this article.

\begin{theorem}\label{Elementary F sets}
	Let $\left(S,.\right)$ be a semigroup, and assume that $\mathcal{F}$
	is a family of subsets of $S$ with the Ramsay property such that
	$\beta\left(\mathcal{F}\right)$ is a subsemigroup of $\beta S$.
	Let $A\subseteq S$. Then the statements (a), (b) and (c) are equivalent and
	are implied by statement (d). If $S$ is countable, then all the five
	statements are equivalent.
	
	\begin{itemize}
			
\item[(a)] $A$ is an essential $\mathcal{F}$-set.
	
\item[(b)] There is a non empty set $T$ of function such that
\begin{itemize}

	\item[(i)]  for all $f\in T$,$\text{domain}\left(f\right)\in\omega$
	and $rang\left(f\right)\subseteq A$;
	
	\item[(ii)]  for all $f\in T$ and all $x\in B_{f}\left(T\right)$,
	$B_{f^{\frown}x}\subseteq x^{-1}B_{f}\left(T\right)$; and
	
	\item[(iii)]  for all $F\in\mathcal{P}_{f}\left(T\right)$, $\cap_{f\in F}B_{f}(T)$
	is a $\mathcal{F}$-set.
	
	\end{itemize}

\item[(c)]  There is a downward directed family $\left\langle C_{F}\right\rangle _{F\in I}$
of subsets of $A$ such that
\begin{itemize}
	
\item[(i)] for each $F\in I$ and each $x\in C_{F}$ there exists
$G\in I$ with $C_{G}\subseteq x^{-1}C_{F}$ and

\item[(ii)] for each $\mathcal{F}\in\mathcal{P}_{f}\left(I\right),\,\bigcap_{F\in\mathcal{F}}C_{F}$
is a $\mathcal{F}$-set. 

\end{itemize}

\item[(d)] There is a decreasing sequence $\left\langle C_{n}\right\rangle _{n=1}^{\infty}$
of subsets of $A$ such that 

\begin{itemize}

\item[(i)] for each $n\in\mathbb{N}$ and each $x\in C_{n}$, there
exists $m\in\mathbb{N}$ with $C_{m}\subseteq x^{-1}C_{n}$ and 

\item[(ii)] for each $n\in\mathbb{N}$, $C_{n}$ is a $\mathcal{F}$-set.
\end{itemize}

\end{itemize}
\end{theorem}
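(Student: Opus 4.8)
The plan is to prove the cycle of implications $(a)\Rightarrow(b)\Rightarrow(c)\Rightarrow(a)$, to observe that $(d)\Rightarrow(c)$ is trivial, and, when $S$ is countable, to close the remaining implication $(a)\Rightarrow(d)$. This follows the template of the characterization of $C$-sets in \cite{HS12}, the only difference being that the idempotent ``$\star$-set'' machinery is run inside the subsemigroup $\beta(\mathcal{F})$ of $\beta S$ instead of inside $\beta S$ itself. Throughout, for an idempotent $p\in\beta S$ and $B\in p$ I write $B^{\star}=\{x\in B:x^{-1}B\in p\}$, and I will use the standard facts that $B^{\star}\in p$ and that $x\in B^{\star}$ implies $x^{-1}(B^{\star})\in p$.

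For $(a)\Rightarrow(b)$, fix an idempotent $p\in\beta(\mathcal{F})$ with $A\in p$ and build $T$ by recursion on the length of the sequences: put $\emptyset\in T$ with $B_{\emptyset}(T)=A^{\star}$, and, once $f\in T$ has $B_{f}(T)$ defined as a subset of $A$ lying in $p$, declare $f^{\frown}x\in T$ for each $x\in B_{f}(T)$ and set $B_{f^{\frown}x}(T)=A^{\star}\cap\bigl(x^{-1}B_{f}(T)\bigr)^{\star}$. Since every $B_{f}(T)$ then lies in $p$ and is contained in $A$, each $f\in T$ has $\mathrm{range}(f)\subseteq A$, which is (b)(i); the inclusion $B_{f^{\frown}x}(T)\subseteq x^{-1}B_{f}(T)$ is built into the definition, which is (b)(ii); and since $p\subseteq\mathcal{F}$ each $B_{f}(T)$ is an $\mathcal{F}$-set, and any finite intersection $\bigcap_{f\in F}B_{f}(T)$ is again a member of $p$, hence an $\mathcal{F}$-set, which is (b)(iii).

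For $(b)\Rightarrow(c)$, take $I=\mathcal{P}_{f}(T)$ and $C_{F}=\bigcap_{f\in F}B_{f}(T)$; directedness is witnessed by $F\cup F'$ since $C_{F\cup F'}=C_{F}\cap C_{F'}$, condition (c)(ii) is exactly (b)(iii), and for (c)(i), given $F\in I$ and $x\in C_{F}$ one puts $G=\{f^{\frown}x:f\in F\}$ and invokes (b)(ii) to get $C_{G}\subseteq\bigcap_{f\in F}x^{-1}B_{f}(T)=x^{-1}C_{F}$. For $(c)\Rightarrow(a)$, set $E=\beta(\mathcal{F})\cap\bigcap_{F\in I}\overline{C_{F}}$; since each $C_{F}$ is an $\mathcal{F}$-set, the identity $\mathcal{F}=\bigcup\beta(\mathcal{F})$ gives $\overline{C_{F}}\cap\beta(\mathcal{F})\neq\emptyset$, and downward directedness makes these closed sets have the finite intersection property, so $E$ is a nonempty closed subset of $\beta S$. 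The crucial point is that $E$ is a subsemigroup: if $p,q\in E$ then $pq\in\beta(\mathcal{F})$ because $\beta(\mathcal{F})$ is assumed to be a subsemigroup, and for each $F\in I$ and each $x\in C_{F}$, condition (c)(i) provides $G$ with $C_{G}\subseteq x^{-1}C_{F}$, so $x^{-1}C_{F}\in q$ (as $C_{G}\in q$); hence $C_{F}\subseteq\{x:x^{-1}C_{F}\in q\}\in p$, i.e.\ $C_{F}\in pq$, so $pq\in\overline{C_{F}}$. Being a compact right topological subsemigroup of $\beta S$, $E$ contains an idempotent $p$, which lies in $\beta(\mathcal{F})$ and satisfies $A\supseteq C_{F}\in p$ for every $F$, so $A\in p$ and $A$ is an essential $\mathcal{F}$-set.

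Finally $(d)\Rightarrow(c)$ is immediate, a decreasing sequence being a downward directed family indexed by $\mathbb{N}$. When $S$ is countable the tree $T$ produced above is countable, so after enumerating $T=\{f_{n}:n\in\mathbb{N}\}$ one may set $C_{n}=\bigcap_{k\le n}B_{f_{k}}(T)$: this is a decreasing sequence of $\mathcal{F}$-subsets of $A$, and for $x\in C_{n}$ the finitely many sequences $f_{k}^{\frown}x$ with $k\le n$ all appear among $f_{1},\dots,f_{m}$ for some $m$, whence $C_{m}\subseteq\bigcap_{k\le n}B_{f_{k}^{\frown}x}(T)\subseteq x^{-1}C_{n}$, which is (d). I expect the verification that $E$ is a subsemigroup in $(c)\Rightarrow(a)$ to be the one genuinely substantive step; everything else is the routine bookkeeping familiar from the $C$-set argument.
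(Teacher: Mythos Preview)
Your proof is correct and follows the standard template for the $C$-set characterization in \cite{HS12}, adapted to work inside the closed subsemigroup $\beta(\mathcal{F})$; the only substantive step, as you note, is the verification that $E=\beta(\mathcal{F})\cap\bigcap_{F\in I}\overline{C_{F}}$ is a subsemigroup, and your argument there is the right one. However, the paper itself does not prove this theorem at all: it is simply quoted from \cite[Theorem~5]{DDG} and invoked as a black box in the later results. So there is no proof in the paper to compare yours against; what you have written is essentially the argument of \cite{DDG}, which in turn adapts the Hindman--Strauss proof for $C$-sets to a general family $\mathcal{F}$ with the Ramsey property.
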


Let $(S,+)$ be a commutative semigroup,  and by \cite[Lemma 2.14]{BG},
the class $\mathcal{CR}$ is partition regular, and it is trivial that the 
class $\mathcal{CR}$ is translation invariant. Hence $\beta\left(\mathcal{CR}\right)$
is a closed subsemigroup of $\beta\left(S\right)$ by Theorem 4. Let
$A$ be a subset of $(S,+)$. We call $A$  an essential $\mathcal{CR}$-set
iff $A\in p$ for some $ p\in E\left(\beta\left(\mathcal{CR}\right)\right)$. Then
from the above, we get the following theorem:

\begin{theorem}
Let $(S,+)$ be a countable commutative semigroup, then the
following are equivalent:
\begin{itemize}
\item[(a)]  $A$ is an essential $\mathcal{CR}$-set.

\item[(b)] There is a decreasing sequence $\left\langle C_{n}\right\rangle _{n=1}^{\infty}$
of subsets of $A$ such that 
\begin{itemize}
\item[(i)] for each $n\in\mathbb{N}$ and each $x\in C_{n}$, there
exists $m\in\mathbb{N}$ with $C_{m}\subseteq x^{-1}C_{n}$ and 

\item[(ii)]  for each $n\in\mathbb{N}$, $C_{n}$ is a $\mathcal{CR}$-set.
\end{itemize}
\end{itemize}
\end{theorem}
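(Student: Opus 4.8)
The plan is to read off the theorem as the special case $\mathcal{F}=\mathcal{CR}$ of Theorem~\ref{Elementary F sets}. To apply that theorem I need $\mathcal{CR}$ to be a family with the Ramsey property and $\beta(\mathcal{CR})$ to be a subsemigroup of $\beta S$, and these are exactly the facts assembled in the discussion just before the statement. Spelling them out: $\mathcal{CR}$ is nonempty (it contains $S$, indeed every piecewise syndetic set) and upward hereditary (if $A$ is a $\mathcal{CR}$-set and $A\subseteq B$, then the same $n,r,M,\alpha,s$ witness $s+M_{\alpha,j}\in A\subseteq B$), so it is a family; it is partition regular by \cite[Lemma 2.14]{BG}, hence has the Ramsey property; and it is translation invariant (replacing a witnessing $s$ by $t+s$ shows $t+A\in\mathcal{CR}$ whenever $A\in\mathcal{CR}$), so Theorem~4 makes $\beta(\mathcal{CR})$ both a left and a right ideal, hence a two-sided ideal, hence a closed subsemigroup of $\beta S$.

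With this in hand I would invoke Theorem~\ref{Elementary F sets} with $\mathcal{F}=\mathcal{CR}$ and $S$ a countable commutative semigroup: its statements (a)--(d) are then all equivalent. Statement (a) there, ``$A$ is an essential $\mathcal{CR}$-set'', is precisely clause (a) of the present theorem, directly from the definition of essential $\mathcal{CR}$-set ($A\in p$ for some idempotent $p\in\beta(\mathcal{CR})$). Statement (d) there --- a decreasing sequence $\langle C_n\rangle_{n=1}^{\infty}$ of subsets of $A$ such that for every $n$ and every $x\in C_n$ there is $m$ with $C_m\subseteq x^{-1}C_n$, and each $C_n$ is a $\mathcal{CR}$-set --- is verbatim clause (b) of the present theorem. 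So (a)$\Leftrightarrow$(b) here is nothing but the equivalence (a)$\Leftrightarrow$(d) of Theorem~\ref{Elementary F sets}, and there is nothing more to do.

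Accordingly I do not anticipate a real obstacle: the theorem is a translation of the general ``essential $\mathcal{F}$-set'' machinery into the specific vocabulary of $\mathcal{CR}$-sets, and the only genuine verification is the (easy) upward-hereditariness and translation invariance of $\mathcal{CR}$, together with the observation that countability of $S$ is what upgrades the one-way implication ``(d)$\Rightarrow$(a)'' in Theorem~\ref{Elementary F sets} to a full equivalence. If one wanted to avoid citing Theorem~\ref{Elementary F sets}, the two implications can be proved by hand in the familiar way: for (b)$\Rightarrow$(a), the decreasing closed sets $\overline{C_n}\cap\beta(\mathcal{CR})$ are nonempty by clause (ii) and Theorem~3, their intersection is a subsemigroup because of clause (i), and Ellis's theorem yields an idempotent $p$ there with $A\in p$; for (a)$\Rightarrow$(b), given an idempotent $p\ni A$ one recursively picks $C_n\in p$ with $C_1\subseteq A$ and $C_{n+1}\subseteq C_n\cap\{x:x^{-1}C_n\in p\}$. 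Routing through Theorem~\ref{Elementary F sets} simply makes this redundant.
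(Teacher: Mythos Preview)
Your proposal is correct and matches the paper's approach exactly: the paper does not write out a separate proof but simply records, in the paragraph preceding the theorem, that $\mathcal{CR}$ is partition regular \cite[Lemma 2.14]{BG} and translation invariant, so that $\beta(\mathcal{CR})$ is a closed subsemigroup, and then states the theorem as the direct specialization of Theorem~\ref{Elementary F sets} to $\mathcal{F}=\mathcal{CR}$ with $S$ countable. Your additional sketch of a direct argument for each implication is extra detail beyond what the paper provides, but the core derivation is identical.
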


\section{Proof of the main theorem}

Now,  we are going to prove the abundance of arithmetic progressions
in $\mathcal{CR}$-sets.

\begin{definition}
Let $A=\left(a_{ij}\right)_{m\times n_{1}}$ and $B=\left(b_{ij}\right)_{m\times n_{2}}$
be two matrices. The concatenation of two matrices $A$ and $B$ is $C=A^{\frown}B$ defined by 
$C=\left(c_{ij}\right)_{m\times\left(n_{1}+n_{2}\right)}$, where 

 $$c_{ij}=
 \begin{cases}

a_{ij} \text{ if } j\leq n_{1}\\
b_{ij}  \text{ if } j>n_{2}.
\end{cases}$$
\end{definition}

\begin{example} Let 
$A=
\begin{pmatrix}
   3 & 6\\
7 & 4\\
1 & 3 
\end{pmatrix}$,
$B=
\begin{pmatrix}
5 & 8 & 9 & 1\\
6 & 8 & 3 & 5\\
7 & 9 & 2 & 1
\end{pmatrix}$ and $C=\begin{pmatrix}
6\\
9\\
8
\end{pmatrix}$. Then $$A^{\frown}B^{\frown}C=\begin{pmatrix}
3 & 6 & 5 & 8 & 9 & 1 & 6\\
7 & 4 & 6 & 8 & 3 & 5 & 9\\
1 & 3 & 7 & 9 & 2 & 1 & 8
\end{pmatrix}.$$
\end{example}

\begin{theorem}\label{main theoreom1}
Let $\left(S,+\right)$ be a commutative semigroup.  Let $A$ be a $\mathcal{CR}$-set in $S$, and $l\in\mathbb{N}$.
Then the set $$\{(a,b):\,\{a,a+b,a+2b,\ldots,a+lb\}\subset A\}$$ is a
$\mathcal{CR}$-set in $(S\times S,+)$.
\end{theorem}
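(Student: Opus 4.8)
The plan is to verify the defining property of a $\mathcal{CR}$-set for
$$B:=\{(a,b)\in S\times S:\{a,a+b,\ldots,a+lb\}\subseteq A\}$$
directly: for each prescribed width $n$ I will pack the $l+1$ affine conditions "$a+kb\in A$" into a single wide matrix over $S$, and then invoke the $\mathcal{CR}$-property of $A$ exactly once.

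Fix $n\in\mathbb{N}$. Apply the definition of $\mathcal{CR}$-set to $A$ with the parameter $n(l+1)$ to obtain $r\in\mathbb{N}$; I claim this same $r$ witnesses the $\mathcal{CR}$-property of $B$ for the parameter $n$. So let $N\in(S\times S)^{r\times n}$ be arbitrary and write $N_{ij}=(P_{ij},Q_{ij})$ with $P,Q\in S^{r\times n}$. Fix any element $e\in S$ (the case $S=\emptyset$ is trivial). For $k\in\{0,1,\ldots,l\}$ let $M_k\in S^{r\times n}$ be the matrix with entries $(M_k)_{ij}=P_{ij}+kQ_{ij}+ke$, so that $M_0=P$, and let $M:=M_0{}^\frown M_1{}^\frown\cdots{}^\frown M_l\in S^{r\times n(l+1)}$ be the concatenation introduced above.

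Now invoke the $\mathcal{CR}$-property of $A$ on $M$: there are a nonempty $\alpha\subseteq\{1,\ldots,r\}$ and $s\in S$ with $s+M_{\alpha,c}\in A$ for every column index $c$ of $M$. Because the row-sum over $\alpha$ is additive in the entries, the $j$-th column of the block $M_k$ inside $M$ has $\alpha$-sum $(M_k)_{\alpha,j}=P_{\alpha,j}+kQ_{\alpha,j}+k\bigl(\sum_{i\in\alpha}e\bigr)$. Setting $t:=\sum_{i\in\alpha}e\in S$, the conclusion of the previous sentence says precisely that $(s+P_{\alpha,j})+k\,(t+Q_{\alpha,j})\in A$ for every $j\in\{1,\ldots,n\}$ and every $k\in\{0,1,\ldots,l\}$, which is exactly the statement $(s+P_{\alpha,j},\,t+Q_{\alpha,j})=(s,t)+N_{\alpha,j}\in B$ for every $j$. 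Since $\alpha$ and $(s,t)$ do not depend on $j$, this establishes the $\mathcal{CR}$-property of $B$ for the width $n$, and $n$ was arbitrary.

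There is no genuine obstacle here; the one point worth flagging is that a general commutative semigroup $S$ has no identity element, so the second-coordinate shift $t$ in $B$ cannot be taken to be "zero" and must instead be synthesized out of the expanded matrix — this is the entire purpose of the auxiliary constant $e$ and of the summands $ke$ built into each $M_k$. Everything else is the routine check that the block structure of $M$ matches, term by term, the $l+1$ membership conditions that define $B$.
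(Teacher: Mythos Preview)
Your proof is correct and follows essentially the same approach as the paper's own argument: both build the concatenated matrix $M_0{}^\frown M_1{}^\frown\cdots$ with $(M_k)_{ij}=P_{ij}+k(Q_{ij}+e)$ for a fixed auxiliary element $e\in S$, invoke the $\mathcal{CR}$-property of $A$ once on this wide matrix, and read off the required shift $(s,|\alpha|e)\in S\times S$. Your write-up is in fact a bit more careful than the paper's, since you explicitly flag why the auxiliary element $e$ is needed in the absence of an identity and you keep the block count at $l+1$ consistently with the statement.
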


\begin{proof}
Let $C=\{(a,b):\,\{a,a+b,a+2b,\ldots,a+\left(l-1\right)b\}\subset A\}$.
Since $A$ is $\mathcal{CR}$-set, for any $n\in\mathbb{N}$, we can
find $r$ such that $M\in S^{r\times ln}$, there exist $ a\in S$,
and $\alpha\subset\left\{ 1,2,\ldots,r\right\} $ such that $a+\sum_{i\in\alpha}M_{i,j}\in A$
for all $j\in\left\{ 1,2,\ldots,ln\right\} $. Let $M^{\prime}\in\left(S\times S\right)^{r\times ln}$
then $M^{\prime}=\left(\left(M_{ij}^{1},M_{ij}^{2}\right)\right)_{r\times ln}$. Let $s\in S$ and 

$$ M^{k}=\begin{pmatrix}
M_{11}^{1}+k(s+M_{11}^{2}) & M_{12}^{1}+k(s+M_{12}^{2}) & \cdots & M_{1n}^{1}+k(s+M_{1n}^{2})\\
M_{21}^{1}+k(s+M_{21}^{2}) & M_{22}^{1}+k(s+M_{22}^{2}) & \ldots & M_{2n}^{1}+k(s+M_{2n}^{2})\\
\vdots & \vdots & \ldots & \vdots\\
M_{r1}^{1}+k(s+M_{r1}^{2}) & M_{r2}^{1}+k(s+M_{r2}^{2}) & \ldots & M_{rn}^{1}+k(s+M_{rn}^{2})
\end{pmatrix}$$
 for $k\in\left\{ 0,1,2,\ldots,l-1\right\} $.\\
Now $M=M^{0\frown}M^{1\frown}\ldots^{\frown}M^{l-1}$ and
$M$ is a $r\times ln$ matrix, there exists $\alpha\in \left\{  1,2,\ldots,r\right\}$
such that $a+\sum_{i\in\alpha}M_{i,j}\in A$ for all $j\in\left\{ 1,2,\ldots,ln\right\} $. \\
Which implies that $a+\sum_{i\in\alpha}M_{i,j}^{1}+k(s+M_{ij}^{2})\in A$
for all $j\in\left\{ 1,2,\ldots,n\right\} $ and $k\in\left\{ 0,1,2,\ldots,l-1\right\} $.\\
$\implies a+\sum_{i\in\alpha}M_{i,j}^{1}+k\sum_{i\in\alpha}(s+M_{ij}^{2})\in A$
for all $j\in\left\{ 1,2,\ldots,n\right\} $ 
and $k\in\left\{ 0,1,2,\ldots,l-1\right\} $.\\
$\implies a+\sum_{i\in\alpha}M_{i,j}^{1}+k(\mid\alpha\mid s+\sum_{i\in\alpha}M_{ij}^{2})\in A$
for all $j\in\left\{ 1,2,\ldots,n\right\} $ and $k\in\left\{ 0,1,2,\ldots,l-1\right\} $.\\
$\implies a+\sum_{i\in\alpha}M_{i,j}^{1}+k(\mid\alpha\mid s+\sum_{i\in\alpha}M_{ij}^{2})\in A$
for all $j\in\left\{ 1,2,\ldots,n\right\} $ and $k\in\left\{ 0,1,2,\ldots,l-1\right\} $.\\
$\implies\left(a+\sum_{i\in\alpha}M_{i,j}^{1},\mid\alpha\mid s +\sum_{i\in\alpha}M_{ij}^{2}\right)\in C$
for all $j\in\left\{ 1,2,\ldots,n\right\} $.\\
$\implies\left(a,\mid\alpha\mid s \right)+\sum_{i\in\alpha}\left(M_{i,j}^{1}+M_{i,j}^{2}\right)\in C$
for all $j\in\left\{ 1,2,\ldots,n\right\} $.\\
Hence $C$ is a $\mathcal{CR}$-set.
\end{proof}
Now,  we can prove the abundance of arithmetic progressions
in  an essential $\mathcal{CR}$-set, using elementary characterization
and the above theorem.
\begin{theorem}\label{main theorem 2}
    Let $A$ be an essential $\mathcal{CR}$-set in $\mathbb{N}$,
and $l\in\mathbb{N}$. Then the set $$\{(a,b):\,\{a,a+b,a+2b,\ldots,a+lb\}\subset A\}$$
is an essential $\mathcal{CR}$-set in $(\mathbb{N\times\mathbb{N}},+)$.
\end{theorem}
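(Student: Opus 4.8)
The plan is to reduce the statement to Theorem \ref{main theoreom1} by means of the elementary characterization of essential $\mathcal{CR}$-sets (Theorem 8). Since $A$ is an essential $\mathcal{CR}$-set in $\mathbb{N}$, Theorem 8 hands us a decreasing sequence $\langle C_n\rangle_{n=1}^\infty$ of subsets of $A$ with: (i) for every $n$ and every $x\in C_n$ there is $m$ with $C_m\subseteq -x+C_n$ (writing $-x+E=\{y:x+y\in E\}$); and (ii) every $C_n$ is a $\mathcal{CR}$-set. I would then set
\[
D_n=\{(a,b):\{a,a+b,\ldots,a+lb\}\subseteq C_n\},
\]
and aim to show that $\langle D_n\rangle$ witnesses, via the converse direction of Theorem 8 applied to the countable commutative semigroup $(\mathbb{N}\times\mathbb{N},+)$, that $T:=\{(a,b):\{a,a+b,\ldots,a+lb\}\subseteq A\}$ is an essential $\mathcal{CR}$-set. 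Note $\langle D_n\rangle$ is automatically decreasing (because $\langle C_n\rangle$ is), and since $C_1\subseteq A$ each $D_n\subseteq D_1\subseteq T$; so it only remains to verify conditions (i) and (ii) for $\langle D_n\rangle$.

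Condition (ii) is free: Theorem \ref{main theoreom1}, applied to the $\mathcal{CR}$-set $C_n$, says precisely that $D_n$ is a $\mathcal{CR}$-set in $(\mathbb{N}\times\mathbb{N},+)$ (in particular each $D_n$ is nonempty). The real work is condition (i). Fix $n$ and $(a,b)\in D_n$, so $a+kb\in C_n$ for each $k\in\{0,1,\ldots,l\}$. For each such $k$, use property (i) of $\langle C_n\rangle$ to choose $m_k$ with $C_{m_k}\subseteq -(a+kb)+C_n$, and let $m=\max\{m_0,\ldots,m_l\}$; monotonicity of $\langle C_n\rangle$ then gives $C_m\subseteq -(a+kb)+C_n$ for all $k$ simultaneously. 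Now if $(c,d)\in D_m$ then $c+kd\in C_m$ for each $k$, hence $(a+kb)+(c+kd)=(a+c)+k(b+d)\in C_n$ for all $k\in\{0,\ldots,l\}$, which is exactly the assertion $(a+c,b+d)=(a,b)+(c,d)\in D_n$. Thus $D_m\subseteq -(a,b)+D_n$, establishing (i), and Theorem 8 then yields that $T$ is an essential $\mathcal{CR}$-set in $(\mathbb{N}\times\mathbb{N},+)$.

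I expect the only genuinely non-mechanical step to be the one just described: a point $(c,d)$ of $\mathbb{N}\times\mathbb{N}$ encodes the whole progression $c,c+d,\ldots,c+ld$, so producing a single index $m$ with $D_m\subseteq -(a,b)+D_n$ forces one to control the $l+1$ translates $-(a+kb)+C_n$ at once; this is handled by passing to the maximum of $m_0,\ldots,m_l$ and invoking that $\langle C_n\rangle$ is decreasing. Everything else is a routine unwinding of the definitions.
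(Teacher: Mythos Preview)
Your proposal is correct and follows essentially the same approach as the paper: obtain a decreasing chain $\langle C_n\rangle$ witnessing that $A$ is an essential $\mathcal{CR}$-set via Theorem 8, push it forward to $D_n=\{(a,b):\{a,\ldots,a+lb\}\subseteq C_n\}$, invoke Theorem \ref{main theoreom1} for condition (ii), and for condition (i) take the maximum of the finitely many indices controlling the translates $-(a+kb)+C_n$. Your write-up is in fact a bit cleaner than the paper's (you are consistent about $l$ versus $l-1$ and you explicitly note $D_n\subseteq T$).
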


\begin{proof}
As $A$ is an essential $\mathcal{CR}$- set, there exists a
decreasing sequence of $\mathcal{CR}$-sets in $\mathbb{N}$, $\{A_{n}:n\in\mathbb{N}\}$
satisfying the property b(i) of  the Theorem 7. 
As all $A_{n}$ are $\mathcal{CR}$-sets $\forall n\in\mathbb{N}$
in the following sequence,
$$
A\supseteq A_{1}\supseteq A_{2}\supseteq\ldots\supseteq A_{n}\supseteq\ldots
$$
and for $i\in\mathbb{N}$, $B_{i}=\{(a,b)\in\mathbb{N\times\mathbb{N}}:\,\{a,a+b,a+2b,\ldots,a+\left(l-1\right)b\}\subset A_{i}\}$
are $\mathcal{CR}$-sets in $\mathbb{N}\times\mathbb{N}$.
Consider,
$$
B\supseteq B_{1}\supseteq B_{2}\supseteq\ldots\supseteq B_{n}\supseteq\ldots.
$$
Pick $n\in\mathbb{N}$ and $(a,b)\in B_{n}$. Then
$\{a,a+b,a+2b,\ldots,a+\left(l-1\right)b\}\subset A_{n}$. 
Now,  by property (b)(i) of the Theorem 7, there exists $N_{i}\in\mathbb{N}$
such that $a+ib\in A_{i}$ for $i=0,1,2,\ldots,l-1$ and $A_{N_{i}}\subseteq-(a+ib)+A_{n}$.
 Taking $N=\max\left\{ N_{0},N_{1},\ldots,N_{l-1}\right\} $, we get
$$
A_{N}\subseteq\bigcap_{i=0}^{l-1}\left(-(a+ib)+A_{n}\right).
$$
Now any $\left(a_{1},b_{1}\right)\in B_{N}$ implies $$\left\{a_{1},a_{1}+b_{1},a_{1}+2b_{1},\ldots,a_{1}+\left(l-1\right)b_{1}\right\}\subseteq A_{N}\subseteq\bigcap_{i=0}^{l-1}\left(-(a+ib)+A_{n}\right).$$
So $(a_{1}+a)+i.(b_{1}+b)\in A_{n}$   for all $ i\in\{0,1,2,\ldots,l-1\}$.
Hence $(a_{1},b_{1})\in-(a,b)+B_{n}$.
This implies $B_{N}\subseteq-(a,b)+B_{n}$. 
Therefore, for any $(a,b)\in B_{n}$, there exists $N\in\mathbb{N}$
such that $B_{N}\subseteq-(a,b)+B_{n}$, showing the  property (b)(i) of  the Theorem 7. This proves the theorem.

\end{proof}

\noindent\textbf{Acknowledgements.} The authors would like to convey their heartfelt thanks to  the  referee for giving valuable expertise comments in the previous draft of this article.

\end{document}